\theoremstyle{plain}
\newtheorem{lem}{Lemma}[section]
\newtheorem{cor}[lem]{Corollary}
\newtheorem{prop}[lem]{Proposition}
\newtheorem*{mthm*}{Main Theorem}
\newtheorem*{mcor*}{Corollary}
\theoremstyle{definition}
\newtheorem{disc}[lem]{Remark}
\newtheorem{para}[lem]{}
\newtheorem*{convention*}{Convention}
\newcommand{\pd}{\operatorname{pd}}
\newcommand{\id}{\operatorname{id}}
\newcommand{\Ht}{\operatorname{ht}}	
\newcommand{\depth}{\operatorname{depth}}	
\newcommand{\rank}{\operatorname{rank}}
\newcommand{\spec}{\operatorname{Spec}}
\newcommand{\ideal}[1]{\mathfrak{#1}}
\newcommand{\fm}{\ideal{m}}
\newcommand{\fq}{\ideal{q}}
\newcommand{\fa}{\ideal{a}}
\newcommand{\xra}{\xrightarrow}
\newcommand{\xla}{\xleftarrow}
\renewcommand{\geq}{\geqslant}
\renewcommand{\leq}{\leqslant}
\newcommand{\Ext}[4][R]{\operatorname{Ext}_{#1}^{#2}(#3,#4)}
\newcommand{\Hom}{\operatorname{Hom}}
\def\Ext{\operatorname{Ext}}
\numberwithin{equation}{lem}
\begin{document}

\bibliographystyle{amsplain}

\title[On Gorenstein fiber products And Applications]{On Gorenstein fiber products And Applications}

\author{Saeed Nasseh}

\address{Department of Mathematical Sciences,
Georgia Southern University,
Statesboro, Georgia 30460, U.S.A.}

\email{snasseh@georgiasouthern.edu}
\urladdr{https://cosm.georgiasouthern.edu/math/saeed.nasseh}

\author{Ryo Takahashi}
\address{Graduate School of Mathematics\\
Nagoya University\\
Furocho, Chikusaku, Nagoya, Aichi 464-8602, Japan}
\email{takahashi@math.nagoya-u.ac.jp}
\urladdr{http://www.math.nagoya-u.ac.jp/~takahashi/}

\author{Keller VandeBogert}

\address{Department of Mathematical Sciences,
Georgia Southern University,
Statesboro, Georgia 30460, U.S.A.}

\email{keller\_l\_vandebogert@georgiasouthern.edu}

\thanks{Takahashi was partly supported by JSPS Grants-in-Aid
for Scientific Research 16K05098.}

\date{\today}


\keywords{Fiber product, Gorenstein ring, hypersurface, injective dimension, projective dimension, regular ring}
\subjclass[2010]{13D05, 13D07, 13H10, 18A30}

\begin{abstract}
We show that a non-trivial fiber product $S\times_k T$ of commutative noetherian local rings $S,T$ with a common residue field $k$ is Gorenstein if and only if it is a hypersurface of dimension 1. In this case, both $S$ and $T$ are regular rings of dimension 1. We also give some applications of this result.
\end{abstract}

\maketitle

\section{Introduction}\label{sec161010a}

Throughout this paper, $(S,\fm_S,k)$ and $(T,\fm_T,k)$ are commutative noetherian local rings with a common residue field $k$, and $S\times_k T$ denotes the fiber product of $S$ and $T$ over $k$. Note that, $S\times_k T$ is the pull-back of the natural surjections $S\xra{\pi_S} k\xla{\pi_T}T$ and
$$
S\times_k T=\left\{(s,t)\in S\times T\mid \pi_S(s)=\pi_T(t)\right\}.
$$
This ring is a commutative noetherian local ring with maximal ideal $\fm_{S\times_k T}=\fm_S\oplus \fm_T$ and residue field $k$. Also, $\fm_S$ and $\fm_T$ are ideals of $S\times_k T$ and there are ring isomorphisms $S\cong (S\times_k T)/\frak m_T$ and $T\cong (S\times_k T)/\frak m_S$. If $S\neq k\neq T$ (or equivalently, $\fm_S\neq 0\neq \fm_T$), then we say that $S\times_k T$ is a non-trivial fiber product. (For more information about fiber products, in addition to the references introduced below, see~\cite{christensen:gmirlr, dress, kostrikin, lescot:sbpfal, moore, NSW, ogoma, ogoma1}.)

In case that $S=T$, it is shown in~\cite[Theorem 1.8]{AAM} that $S\times_k S$ is Gorenstein if and only if $S$ is a regular ring of dimension 1. (See also~D'Anna~\cite{D'Anna} and Shapiro~\cite{Shapiro}.) In this note we give the following generalization of~\cite[Theorem 1.8]{AAM} which we prove in~\ref{para161012a}; compare this result with Ogoma~\cite[Theorem 4]{Ogoma2}.

\begin{mthm*}\label{cor161010b}
Let $S\times_k T$ be a non-trivial fiber product.
The ring $S\times_k T$ is Gorenstein if and only if it is a hypersurface of dimension 1, and then both $S$ and $T$ are regular rings of dimension 1.

Moreover, in this case, $S\times_k T$ is isomorphic to a fiber product $Q/(p)\times_k Q/(q)$, where $Q$, $Q/(p)$, and $Q/(q)$ are regular local rings with residue field $k$ and $p,q\in Q$ are prime elements.
\end{mthm*}

As applications of this theorem, we give a stronger version of a result of Takahashi~\cite[Theorem A]{Ryo1} and prove a generalization of~\cite[Proposition 3.10]{adela} due to Atkins and Vraciu; see Corollaries~\ref{cor170106b} and~\ref{cor160519a}.

\section{Proof Of Main Theorem}\label{sec161010b}

For the rest of this paper, $(R,\fm_R, \ell)$ will be a commutative noetherian local ring, and recall that the rings $S$ and $T$ are introduced in the Introduction.
The following result is proved in~\cite[Proposition 1.7]{AAM} when $S$ and $T$ are artinian.

\begin{prop}\label{prop170110a}
Let $S\times_k T$ be non-trivial fiber product. Then
$S\times_kT$ is Cohen-Macaulay if and only if $\dim S\times_kT\leq 1$ and $S$ and $T$ are Cohen-Macaulay with $\dim S=\dim S\times_kT=\dim T$.
\end{prop}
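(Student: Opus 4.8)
The plan is to reduce to known dimension facts about fiber products and then run a depth computation. Write $R = S\times_k T$ with maximal ideal $\fm_R = \fm_S\oplus\fm_T$. First I would recall the basic structural facts: as $R$-modules one has $\fm_R\cong\fm_S\oplus\fm_T$, and $R/\fm_S\cong T$, $R/\fm_T\cong S$, while $\fm_S\cong\fm_S$ as an $S$-module (the $T$-action being through $\pi_T$, i.e. annihilated by $\fm_T$), so $\fm_S$ is a module over $S$ supported only at $\fm_R$ in the sense that $\fm_T\fm_S=0$; symmetrically for $\fm_T$. The dimension statement $\dim R=\max\{\dim S,\dim T\}$ is standard (it follows from the fact that $\spec R$ is obtained by gluing $\spec S$ and $\spec T$ along the closed point, since $\fm_S\fm_T=0$ forces every prime $\neq\fm_R$ to contain exactly one of $\fm_S,\fm_T$). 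I would state this as a preliminary observation, citing \cite{AAM} or \cite{Ogoma2} as needed.

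Next, the heart of the argument is the depth formula. From the exact sequences $0\to\fm_T\to R\to S\to 0$ and $0\to\fm_S\to R\to T\to 0$ of $R$-modules, together with $\fm_S\cap\fm_T=0$ and $R/\fm_R=k$, one gets $0\to\fm_R\to R\to k\to 0$ and a decomposition $\fm_R\cong\fm_S\oplus\fm_T$. Applying the depth lemma (the behavior of depth along short exact sequences) to these sequences, and using that $\depth_R\fm_S=\depth_S\fm_S$ and $\depth_R\fm_T=\depth_T\fm_T$ (depth is insensitive to the ring along a ring surjection when computed via $\fm_R$, since $\fm_R\fm_S=\fm_S\fm_S$ acts the same), I would derive
\[
\depth R=\min\{\depth_S\fm_S,\ \depth_T\fm_T,\ 1\}\,,
\]
or more precisely relate $\depth R$ to $\depth S$ and $\depth T$. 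The key point: if $\depth S\geq 1$ then $\depth_S\fm_S=\depth S$ by the depth lemma applied to $0\to\fm_S\to S\to k\to 0$ (using $\depth k=0$), and similarly for $T$; and the "$1$" enters because $R\to k$ has kernel $\fm_R$ which is a direct sum of two modules each killed by a nonzero ideal, capping the depth at $1$ in the non-trivial case (here is where $\fm_S\neq 0\neq\fm_T$ is used: if, say, $\fm_T=0$ then $R=S$ and no cap). Thus $\depth R=\min\{\depth S,\depth T,1\}$ in the non-trivial case — and I would prove this carefully as the main lemma inside the proof.

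With the dimension and depth formulas in hand, the proposition follows by unwinding definitions. ($\Leftarrow$) If $\dim R\leq 1$ and $S,T$ are Cohen-Macaulay with $\dim S=\dim R=\dim T$: if $\dim R=0$ then $R$ is artinian hence trivially CM; if $\dim R=1$ then $\dim S=\dim T=1$ and CM gives $\depth S=\depth T=1$, so by the depth formula $\depth R=\min\{1,1,1\}=1=\dim R$, i.e. $R$ is CM. ($\Rightarrow$) Suppose $R$ is CM. Since $\depth R=\min\{\depth S,\depth T,1\}\leq 1$, we get $\dim R=\depth R\leq 1$. If $\dim R=0$ there is nothing more; if $\dim R=1$ then $\depth R=1$ forces $\depth S\geq 1$ and $\depth T\geq 1$, hence $\dim S\geq 1$ and $\dim T\geq 1$; combined with $\dim S,\dim T\leq\dim R=1$ this gives $\dim S=\dim T=1=\depth S=\depth T$, so $S$ and $T$ are CM of dimension $1=\dim R$.

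The main obstacle I anticipate is establishing the depth formula $\depth R=\min\{\depth S,\depth T,1\}$ cleanly — in particular justifying $\depth_R\fm_S=\depth_S\fm_S$ and making rigorous the "$\leq 1$ cap" coming from non-triviality. One has to be careful that the depth lemma is applied with the correct vanishing hypotheses (e.g. one needs $\depth_R k = 0$, which is clear, and needs to handle the edge case $\depth S = 0$ or $\depth T = 0$ separately, where the formula should read $\depth R = 0$). Once that lemma is pinned down the rest is bookkeeping.
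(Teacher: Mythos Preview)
Your proposal is correct and takes essentially the same approach as the paper: both arguments reduce the proposition to the two identities $\dim(S\times_kT)=\max\{\dim S,\dim T\}$ and $\depth(S\times_kT)=\min\{\depth S,\depth T,1\}$, and then unwind the Cohen--Macaulay condition exactly as you do. The only difference is that the paper simply cites these formulas from Lescot~\cite{lescot:sbpfal} and Christensen--Striuli--Veliche~\cite[Remark~3.1]{christensen:gmirlr} rather than re-deriving the depth formula, so the obstacle you anticipate can be bypassed by a reference.
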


\begin{proof}
The assertion follows from the equalities
\begin{align}
\dim S\times_kT&=\max\{\dim S,\dim T\}\notag \\
\depth S\times_kT&=\min\{\depth S,\depth T,1\}.\notag
\end{align}
(See Lescot~\cite{lescot:sbpfal}, or Christensen, Striuli, and Veliche \cite[Remark 3.1]{christensen:gmirlr}.)
\end{proof}

The following lemma will be used in the proof of Main Theorem.

\begin{lem}\label{lem170106a}
Assume that $R$ is a hypersurface of dimension 1, and let $I$ be a non-zero ideal of $R$ such that $R/I$ is a regular ring of dimension 1. Then $R/I\cong Q/(f)$, where $Q$ is a regular local ring and $f\in Q$ is a prime element.
\end{lem}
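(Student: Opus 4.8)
The plan is to pull the whole situation back along a presentation of the hypersurface $R$ by a regular local ring and then to invoke the Auslander--Buchsbaum theorem that regular local rings are unique factorization domains. Concretely: write $R\cong P/(g)$ with $(P,\fm_P)$ regular local and $g\in P$, as provided by the hypothesis that $R$ is a hypersurface, and let $J\subseteq P$ be the preimage of $I$ under $P\onto R$, so that $R/I\cong P/J$. The assertion then reduces to showing that $J$ is a height-one prime of $P$, hence principal, and that a principal prime ideal is generated by a prime element.

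First I would dispose of the preliminary points. Since $R/I$ is regular of dimension $1$ it is an integral domain, so $I$ is a prime ideal of $R$ and $J$ is a prime ideal of $P$ containing $g$. Next I would check that $g\ne 0$: otherwise $R\cong P$ is a one-dimensional regular local ring, in particular a domain, and then the nonzero prime $I$ of the one-dimensional local domain $R$ would satisfy $\dim R/I=0$, contradicting $\dim R/I=1$ --- this is exactly where the hypotheses $I\ne 0$ and $\dim R/I=1$ enter. As $g$ is a nonzerodivisor and a non-unit of the Cohen--Macaulay local ring $P$, it follows that $\dim P=\dim P/(g)+1=\dim R+1=2$.

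Now the main step. From $R/I\cong P/J$ we get $\dim P/J=\dim R/I=1$, while $J$ contains the nonzero element $g$, so $1\le\Ht J\le\dim P-\dim P/J=1$; thus $\Ht J=1$. By the Auslander--Buchsbaum theorem $P$ is a UFD, so the height-one prime $J$ is principal, say $J=(f)$, and $f$ is a prime element of $P$ because $(f)=J$ is a proper prime ideal. Taking $Q:=P$ then gives $R/I\cong P/J=Q/(f)$, as claimed. (Note that $Q=P$ is the same regular ring presenting $R$ and $(g)\subseteq(f)$; this compatibility is what makes the lemma useful in the proof of the Main Theorem, since applying it to $I=\fm_T$ and to $I=\fm_S$ then yields $S$ and $T$ as quotients of one and the same regular local ring.)

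I do not expect a genuine obstacle. The dimension bookkeeping and the behaviour of prime ideals under the surjection $P\onto R$ are routine, and the one substantive ingredient --- that regular local rings are factorial --- is classical. The only point requiring a little care is the very first reduction, namely ensuring the hypotheses force $g\ne 0$ (equivalently, that $R$ is not itself regular), because without this the crucial equality $\dim P=2$, and hence the height-one conclusion, would break down.
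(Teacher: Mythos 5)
Your proof is correct and follows essentially the same route as the paper: present the hypersurface as a quotient of a $2$-dimensional regular local ring, observe that the preimage of $I$ is a height-one prime, and use the Auslander--Buchsbaum factoriality of the regular local ring to conclude it is generated by a prime element. The only cosmetic difference is that the paper reaches principality by factoring $g$ into primes and matching heights, whereas you invoke directly that height-one primes in a UFD are principal; your explicit check that $g\neq 0$ (so that $\dim Q=2$) is a welcome detail the paper leaves implicit.
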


\begin{proof}
Let $R\cong Q/(g)$, where $(Q,\fm_Q,\ell)$ is a 2-dimensional regular local ring and $g\in \fm_Q$. Since $I$ is a prime ideal of $R$, it corresponds to a prime ideal $\fq/(g)$ of $Q/(g)$, where $\fq\in \spec(Q)\cap V((g))$. If $g=g_1g_2\cdots g_n$ is a prime factorization of $g$ in $Q$, then there exists an integer $1\leq i\leq n$ such that $g_i\in \fq$. Let $f:=g_i$, and note that $\Ht_Q(\fq)=1=\Ht_Q((f))$ because $\Ht_R(I)=0$. Hence, $\fq=(f)$.
Therefore,
$$
\frac{R}{I}\cong \frac{Q/(g)}{\fq/(g)}=\frac{Q/(g)}{(f)/(g)}\cong \frac{Q}{(f)}
$$
as desired.
\end{proof}

Next we introduce some notations and discuss some results from~\cite{kostrikin} and~\cite{lescot:sbpfal} to use in the proof of our Main Theorem. (See also~\cite{christensen:gmirlr}.)

\begin{para}
Let $M$ be a finitely generated $R$-module. Recall that the \emph{Poincar\'e series} and the \emph{Bass series} of $M$, denoted $P^M_R(t)$ and $I_M^R(t)$, respectively, are the formal Laurent series defined as follows:
\begin{align*}
P^M_R(t)&:=\sum_{i\geq 0}\rank_{\ell}(\Ext_R^i(M,\ell))t^i\\
I_M^R(t)&:=\sum_{i\geq 0}\rank_{\ell}(\Ext_R^i(\ell,M))t^i.
\end{align*}
We simply denote $I_R^R(t)$ by $I_R(t)$.
The coefficient of $t^{\depth R}$ in $I_R(t)$ is called \emph{type of $R$}, and is denoted $\gamma_R$. Note that $\gamma_R\neq 0$ and all the coefficients of $t^i$ in $I_R(t)$ for $i<\depth R$ are zero. Also, note that the constant term in $P^{\ell}_R(t)$ is 1.
\end{para}

\begin{para}
By Kostrikin and {\v{S}}afarevi{\v{c}}~\cite{kostrikin} we have the equality
\begin{equation}\label{eq160315e}
\frac{1}{P_{S\times_kT}^k(t)}=\frac{1}{P_S^{k}(t)}+\frac{1}{P_T^{k}(t)}-1
\end{equation}
which gives a relation between Poincar\'{e} series of $k$ over $S\times_kT$ and over the rings $S$ and $T$.
Also, by a result of Lescot~\cite[Theorem 3.1]{lescot:sbpfal} we have the following formulas:

If $S$ and $T$ are singular, then
\begin{equation}\label{eq160518a}
\frac{I_{S\times_kT}(t)}{P_{S\times_kT}^k(t)}=t+\frac{I_S(t)}{P_S^{k}(t)}+\frac{I_T(t)}{P_T^{k}(t)}.
\end{equation}

If $S$ is singular and $T$ is regular with $\dim T=n$, then
\begin{equation}\label{eq160518b}
\frac{I_{S\times_kT}(t)}{P_{S\times_kT}^k(t)}=t+\frac{I_S(t)}{P_S^{k}(t)}-\frac{t^{n+1}}{(1+t)^n}.
\end{equation}

If $S$ and $T$ are regular with $\dim S=m$ and $\dim T=n$, then
\begin{equation}\label{eq160518c}
\frac{I_{S\times_kT}(t)}{P_{S\times_kT}^k(t)}=t-\frac{t^{m+1}}{(1+t)^m}-\frac{t^{n+1}}{(1+t)^n}.
\end{equation}
\end{para}

We are now ready to prove the Main Theorem.

\begin{para}[\emph{Proof of Main Theorem}]\label{para161012a}
Assume that $A:=S\times_k T$ is a Gorenstein ring. By Proposition~\ref{prop170110a}, we have $\dim A\leq 1$ and $S$ and $T$ are Cohen-Macaulay with $\dim S=\dim A=\dim T$.
We prove the theorem by considering the following three cases, and when using the Poincar\'e and Bass series, we simply write $I$ and $P$ instead of $I(t)$ and $P(t)$.

Case 1: Assume that $S$ and $T$ are singular. Then by~\eqref{eq160315e} and~\eqref{eq160518a} we have
\begin{equation}\label{eq160518f}
I_A\left(P_T^k+P_S^k-P_T^kP_S^k\right)=
tP_T^kP_S^k+I_SP_T^k+I_TP_S^k.
\end{equation}

If $\dim A=0$, then both $S$ and $T$ are Cohen-Macaulay of dimension zero. Now by looking at the constant terms on the left and right of~\eqref{eq160518f} we obtain
$1=\gamma_A=\gamma_S+\gamma_T$. But this is impossible because $\gamma_S$ and $\gamma_T$ are positive integers.

If $\dim A=1$, then $S$ and $T$ are Cohen-Macaulay of dimension one. Now by looking at the coefficient of $t$ on the left and right of~\eqref{eq160518f} we obtain $1=\gamma_A=1+\gamma_S+\gamma_T$. Hence, $\gamma_S+\gamma_T=0$, which is again impossible.
Therefore, both of $S$ and $T$ cannot be singular, and Case 1 does not hold.

Case 2: Assume that $S$ is singular and $T$ is regular with $\dim T=n$.
Then it follows from~\eqref{eq160315e} and~\eqref{eq160518b} that
\begin{equation}\label{eq160518g}
I_A\left(P_T^k+P_S^k-P_T^kP_S^k\right)(1+t)^n=
\left(t(1+t)^n-t^{n+1}\right)P_T^kP_S^k+(1+t)^nI_SP_T^k.
\end{equation}

If $\dim A=0$, then we have $n=0$. Since $T$ is regular, we must have $T=k$, which is a contradiction with our assumption.

If $\dim A=1$, then $n=1$. Now by looking at the coefficient of $t$ on the left and right of~\eqref{eq160518g} we obtain $1=\gamma_A=1+\gamma_S$. This implies that $\gamma_S=0$, which is impossible. Hence, Case 2 also does not hold.

Therefore, the only possibility is the following case.

Case 3. Both $S$ and $T$ are regular rings.
If $\dim A=0$, then both $S$ and $T$ have dimension zero, and hence, both are equal to $k$.
This contradiction shows that we must have $\dim A=1=\dim S=\dim T$. Therefore, by~\cite[(3.2) Observation]{christensen:gmirlr}, the ring $A$ is a hypersurface of dimension one.

For the second part of the Main Theorem, note that $S\cong A/\fm_T$ and $T\cong A/\fm_S$. Hence the assertion follows from Lemma~\ref{lem170106a} and its proof. \qed
\end{para}

We conclude this section with the following result that will be used later.

\begin{prop}\label{prop170110b}
A non-trivial fiber product $A:=S\times_k T$ is not regular.
\end{prop}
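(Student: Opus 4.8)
The plan is to show that a non-trivial fiber product $A := S\times_k T$ cannot be regular by a dimension/embedding-dimension obstruction, exploiting the fact that the maximal ideal $\fm_A = \fm_S \oplus \fm_T$ splits as a direct sum of two nonzero ideals whose product vanishes. First I would recall from the Introduction that $\fm_S$ and $\fm_T$ are nonzero ideals of $A$ (this is exactly the non-triviality hypothesis $S \neq k \neq T$), and that $\fm_S \cap \fm_T = 0$ and $\fm_S \cdot \fm_T = 0$ inside $A$, since an element of $\fm_S$ has zero second coordinate and an element of $\fm_T$ has zero first coordinate. In particular $A$ has nonzero zerodivisors: pick $0 \neq s \in \fm_S$ and $0 \neq t \in \fm_T$, then $st = 0$. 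A regular local ring is a domain, so this already gives a contradiction whenever $\dim A \geq 1$.

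It remains to rule out the case $\dim A = 0$. A zero-dimensional regular local ring is a field, so we would need $A = k$; but then $\fm_A = 0$, forcing $\fm_S = \fm_T = 0$, i.e. $S = T = k$, contradicting non-triviality. Alternatively, and perhaps more uniformly, I would invoke the dimension formula $\dim A = \max\{\dim S, \dim T\}$ already cited in the proof of Proposition~\ref{prop170110a}: if $A$ were regular of dimension $d$, then $S \cong A/\fm_T$ is a quotient of a regular ring, hence has $\dim S \leq d$, and combined with $\edim A = \edim S + \edim T$ (which follows from $\fm_A/\fm_A^2 = \fm_S/\fm_S^2 \oplus \fm_T/\fm_T^2$, using $\fm_S\fm_T = 0$) one sees that regularity $\edim A = \dim A$ forces $\edim S + \edim T = \max\{\dim S,\dim T\}$, which is impossible once both $\edim S \geq 1$ and $\edim T \geq 1$. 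Either route closes the argument.

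The cleanest writeup simply combines the two observations: $A$ is not a domain (so not regular of positive dimension), and $A \neq k$ (so not regular of dimension zero). The only mild subtlety — the "main obstacle," such as it is — is making sure the degenerate dimension-zero case is handled explicitly rather than swept under the "domain" argument, since a field is trivially a domain; the non-triviality hypothesis is precisely what excludes it. No delicate computation is needed, and in fact one could also deduce the statement a posteriori from the Main Theorem (a hypersurface of dimension $1$ that is a non-trivial fiber product is never regular, since by the Main Theorem $S$ and $T$ would both be regular of dimension $1$ and $P^k_A$ would be given by~\eqref{eq160315e}, which is not the Poincaré series of a regular ring), but the direct zerodivisor argument is shorter and self-contained.
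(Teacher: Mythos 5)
Your proof is correct, but it takes a genuinely different and more elementary route than the paper. You observe that for nonzero $s\in\fm_S$ and $t\in\fm_T$ one has $st=0$ in $A$, so $A$ is not a domain, while every regular local ring is a domain. The paper instead first invokes Proposition~\ref{prop170110a} to get $\dim A\leq 1$, then applies the Auslander--Buchsbaum formula to conclude $\pd_A(A/\fm_T)\leq 1$, hence that $\fm_T$ would be a free $A$-module, which contradicts $\fm_S\fm_T=0$ with $\fm_S\neq 0$. Both arguments ultimately rest on the same structural fact ($\fm_S\fm_T=0$ with both ideals nonzero), but yours avoids the Cohen--Macaulay dimension bound and Auslander--Buchsbaum entirely, at the cost of citing the standard fact that regular local rings are domains. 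One small simplification to your writeup: the case split on $\dim A$ is unnecessary, since a zero-dimensional regular local ring is a field, hence a domain, and so is already excluded by the zerodivisor observation; the non-triviality hypothesis enters only in guaranteeing that $s$ and $t$ can be chosen nonzero. Your secondary embedding-dimension argument ($\edim A=\edim S+\edim T>\dim A$) is also valid and is closer in spirit to how one would see directly that the dimension-one hypersurface in the Main Theorem fails to be regular.
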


\begin{proof}
If $A$ is a regular ring, then by Proposition~\ref{prop170110a} we have $\dim A\leq 1$. Now by the Auslander-Buchsbaum formula we have $\pd_A (A/\fm_T)\leq 1$. This implies that $\pd_A(\fm_T)=0$, and hence $\fm_T$ is a free $A$-module. But this cannot happen because $\fm_S\fm_T=0$, and $\fm_S\neq 0$. Therefore, $A$ is not a regular ring.
\end{proof}

\section{Applications}

This section contains some applications of the Main Theorem. In particular, we give a stronger version of a result of Takahashi and prove a generalization of a result of Atkins and Vraciu; see Corollaries~\ref{cor170106b} and~\ref{cor160519a} below.

We start with a result of Ogoma~\cite[Lemma 3.1]{ogoma} that plays an essential role in this section.

\begin{para}\label{disc161010a}
Let $\fa\subseteq R$ be an ideal of $R$ that has a decomposition $\fa=I\oplus J$, where $I$ and $J$ are non-zero ideals of $R$. Then there is an isomorphism
$R\cong (R/I)\times_{R/\fa}(R/J)$.
This isomorphism is naturally defined by
$r\mapsto (r+I,r+J)$ for $r\in R$.
\end{para}

As an immediate observation of this discussion we record the following result.

\begin{prop}\label{prop161010a}
A local ring is a non-trivial fiber product of the form $S\times_k T$ if and only if its maximal ideal is decomposable.
\end{prop}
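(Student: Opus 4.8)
The plan is to prove both implications of Proposition~\ref{prop161010a} directly.

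For the forward direction, suppose $R$ is a non-trivial fiber product, so $R\cong S\times_k T$ with $\fm_S\neq 0\neq \fm_T$. As recalled in the Introduction, the maximal ideal of $R$ is $\fm_R=\fm_S\oplus\fm_T$, where $\fm_S$ and $\fm_T$ are honest ideals of $R$ (being the kernels of the projections onto $T$ and $S$ respectively). Non-triviality says both summands are non-zero, so $\fm_R$ is decomposable as an $R$-module.

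For the converse, suppose the maximal ideal $\fm_R$ of a local ring $(R,\fm_R,\ell)$ decomposes as $\fm_R=I\oplus J$ with $I,J$ non-zero ideals. Then I would invoke Ogoma's observation in~\ref{disc161010a} with $\fa=\fm_R$: it gives an isomorphism $R\cong (R/I)\times_{R/\fm_R}(R/J)=(R/I)\times_\ell(R/J)$. Setting $S:=R/I$ and $T:=R/J$, these are local rings with residue field $\ell$ (since $\fm_R/I$ and $\fm_R/J$ are their maximal ideals), and $R\cong S\times_\ell T$ is a fiber product. It remains to check non-triviality, i.e.\ that $\fm_S\neq 0\neq\fm_T$; equivalently $I\neq\fm_R\neq J$. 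This is immediate: if $I=\fm_R$ then $J\subseteq I\cap J=0$, contradicting $J\neq 0$, and symmetrically. Hence the fiber product is non-trivial.

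There is essentially no obstacle here; the only point requiring a moment's care is the bookkeeping in the converse—verifying that the residue field of $S\times_\ell T$ really is $\ell$ and, above all, that the decomposition hypothesis forces each of $I,J$ to be a proper sub-ideal of $\fm_R$ so that non-triviality is automatic. Once Ogoma's isomorphism from~\ref{disc161010a} is in hand, the statement follows formally.
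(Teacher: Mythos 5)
Your proof is correct and follows exactly the route the paper intends: the forward direction from $\fm_{S\times_kT}=\fm_S\oplus\fm_T$, and the converse by applying Ogoma's isomorphism from~\ref{disc161010a} with $\fa=\fm_R$. The paper records the proposition as an immediate consequence of that observation without spelling out the details you supply (residue field bookkeeping and non-triviality), so your write-up is just a more explicit version of the same argument.
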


From Proposition~\ref{prop170110b} we obtain the following result.

\begin{cor}\label{cor170119a}
If $\fm_R$ is decomposable, then $R$ is not regular.
\end{cor}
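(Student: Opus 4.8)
The plan is to deduce this directly from the two preceding propositions by unwinding the definition of a decomposable maximal ideal. First I would observe that if $\fm_R$ is decomposable, say $\fm_R = I \oplus J$ with $I$ and $J$ non-zero ideals of $R$, then the discussion in~\ref{disc161010a} (applied with $\fa = \fm_R$) yields an isomorphism $R \cong (R/I) \times_{R/\fm_R} (R/J)$. Since $R/\fm_R = \ell$ is a field, this exhibits $R$ as a fiber product $S \times_\ell T$ with $S = R/I$ and $T = R/J$, and it is non-trivial precisely because $I, J \subsetneq \fm_R$ forces $\fm_S = \fm_R/I \neq 0 \neq \fm_T = \fm_R/J$; this is exactly the content of Proposition~\ref{prop161010a}.

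Having identified $R$ with a non-trivial fiber product, I would then invoke Proposition~\ref{prop170110b}, which asserts precisely that such a ring is not regular. Concatenating the two statements gives the conclusion that $R$ is not regular.

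There is no real obstacle here: the corollary is a formal consequence of Propositions~\ref{prop161010a} and~\ref{prop170110b}, so the only point requiring (minimal) care is to confirm that the ring over which the fiber product is taken in~\ref{disc161010a} is genuinely the residue field $\ell$, so that Proposition~\ref{prop170110b} is applicable; this is immediate since $\fa = \fm_R$. The proof is therefore a one- or two-line argument.

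\begin{proof}
Suppose $\fm_R$ is decomposable. By Proposition~\ref{prop161010a}, $R$ is a non-trivial fiber product of the form $S \times_k T$. By Proposition~\ref{prop170110b}, no such ring is regular, so $R$ is not regular.
\end{proof}
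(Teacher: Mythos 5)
Your proof is correct and is exactly the paper's argument: the paper derives this corollary immediately from Proposition~\ref{prop161010a} (decomposable maximal ideal $\Leftrightarrow$ non-trivial fiber product) together with Proposition~\ref{prop170110b}. Nothing further is needed.
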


The next result follows directly from~\cite[Corollary 4.2]{NSW}.

\begin{cor}\label{cor161013a}
Assume that $\fm_R$ is decomposable. For finitely generated $R$-modules $M$ and $N$ if $\Ext^{i}_R(M,N)=0$ for all $i\gg 0$, then $\pd_R(M)\leq 1$ or $\id_R(N)\leq 1$.
\end{cor}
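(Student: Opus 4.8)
The plan is to reduce to the case of a non-trivial fiber product and then invoke the Ext-vanishing dichotomy of \cite[Corollary 4.2]{NSW}, afterwards sharpening its conclusion by means of the depth computation underlying Proposition~\ref{prop170110a}. First I would apply Proposition~\ref{prop161010a}: since $\fm_R$ is decomposable, the ring $R$ is isomorphic to a non-trivial fiber product $S\times_k T$ of local rings with common residue field. This places us precisely in the setting to which \cite[Corollary 4.2]{NSW} applies.

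The core of the argument is the dichotomy supplied by \cite[Corollary 4.2]{NSW}: for finitely generated modules $M$ and $N$ over a non-trivial fiber product, the vanishing $\Ext^i_R(M,N)=0$ for all $i\gg 0$ forces $\pd_R M<\infty$ or $\id_R N<\infty$. It then remains only to upgrade \emph{finite} to \emph{at most} $1$, and this is where the structure of fiber products enters. Recall from the proof of Proposition~\ref{prop170110a} the depth formula $\depth R=\min\{\depth S,\depth T,1\}\leq 1$. If $\pd_R M<\infty$, the Auslander--Buchsbaum formula gives
\[
\pd_R M=\depth R-\depth_R M\leq\depth R\leq 1,
\]
while if $\id_R N<\infty$, Bass's formula gives $\id_R N=\depth R\leq 1$. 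In either case the asserted bound holds, and the two cases together exhaust the dichotomy.

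I expect the only genuine point requiring care to be the correct invocation of \cite[Corollary 4.2]{NSW}: one must confirm that its hypotheses match ours---finitely generated modules over a non-trivial fiber product, with $\Ext$ vanishing in all sufficiently high degrees---and that its conclusion is exactly the finite-projective-or-finite-injective-dimension dichotomy stated above. Once that citation is secured, the remaining steps are entirely standard, resting only on the two classical finiteness formulas together with the previously established bound $\depth R\leq 1$; in particular, no further input from the Main Theorem is needed.
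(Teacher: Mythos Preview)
Your proposal is correct and follows the paper's approach: both identify $R$ with a non-trivial fiber product via Proposition~\ref{prop161010a} and then invoke \cite[Corollary~4.2]{NSW}. The paper records that the result follows \emph{directly} from that citation, indicating that \cite[Corollary~4.2]{NSW} already delivers the bound $\leq 1$ rather than mere finiteness, so your subsequent sharpening via Auslander--Buchsbaum, Bass's formula, and the depth estimate from Proposition~\ref{prop170110a} is sound but redundant.
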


\begin{disc}\label{disc170119a}
Corollary~\ref{cor161013a} shows in particular that if $\fm_R$ is decomposable, then $R$ satisfies the Auslander-Reiten Conjecture, that is, if for a finitely generated $R$-module $M$ we have $\Ext^{i}_R(M,M\oplus R)=0$ for all $i>0$, then $M$ is a free $R$-module. (See~\cite{auslander:gvnc} for details about this conjecture.)
\end{disc}

The following is a stronger version of a result of Takahashi~\cite[Theorem A]{Ryo1}. Note that in this corollary we do not assume that $R$ is complete; see Remark~\ref{disc050117}.

\begin{cor}\label{cor170106b}
If $\fm_R$ is decomposable, then the following are equivalent.
\begin{enumerate}[\rm(i)]
\item
There is a finitely generated $R$-module $E$ of finite injective dimension such that $\Ext^{i}_R(E,R)=0$ for all $i\gg 0$;
\item
$R$ is Gorenstein;
\item
$R$ is a hypersurface of dimension 1. In this case, $R$ is isomorphic to a fiber product $Q/(p)\times_{\ell} Q/(q)$, where $Q$, $Q/(p)$, and $Q/(q)$ are regular local rings with residue field $\ell$ and $p,q\in Q$ are prime elements;
\item
There is a finitely generated $R$-module $M$ with infinite projective dimension such that $\Ext^{i}_R(M,R)=0$ for all $i\gg 0$.
\end{enumerate}
\end{cor}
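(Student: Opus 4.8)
The plan is to prove the implications (i)$\Rightarrow$(ii), (iii)$\Rightarrow$(i), (iii)$\Rightarrow$(iv), and (iv)$\Rightarrow$(ii), while the equivalence (ii)$\Leftrightarrow$(iii) together with the explicit description in (iii) comes directly from the Main Theorem; these together yield all the asserted equivalences. The starting point is that, since $\fm_R$ is decomposable, Proposition~\ref{prop161010a} identifies $R$ with a non-trivial fiber product $S\times_k T$, so the Main Theorem applies verbatim to $R$. In particular, whenever (iii) holds we may freely use that $R$ is Gorenstein of dimension $1$ and that $S\cong R/\fm_T$ and $T\cong R/\fm_S$ are regular local rings of dimension $1$.

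For (iii)$\Rightarrow$(i) I would take $E=R$: since $R$ is Gorenstein it has finite injective dimension over itself, and $\Ext^i_R(R,R)=0$ for all $i>0$. For (iii)$\Rightarrow$(iv) I would take $M:=R/\fm_T$, which is isomorphic to the regular ring $S$ and hence, viewed as an $R$-module, has dimension and depth both equal to $1=\dim R$; thus $M$ is maximal Cohen-Macaulay over the Gorenstein ring $R$, so $\Ext^i_R(M,R)=0$ for all $i>0$. On the other hand $\pd_R M=\infty$: otherwise the Auslander-Buchsbaum formula would give $\pd_R M=\depth R-\depth M=0$, forcing $M$ to be free and $\fm_T=0$, contrary to non-triviality (this is exactly the argument used in the proof of Proposition~\ref{prop170110b}). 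Hence $M$ realizes (iv).

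The two implications landing in (ii) are obtained by feeding the Ext-vanishing into Corollary~\ref{cor161013a}. Assuming (iv), apply that corollary to the pair $M,R$: from $\Ext^i_R(M,R)=0$ for $i\gg0$ we get $\pd_R M\leq1$ or $\id_R R\leq1$, and since $\pd_R M=\infty$ the first alternative is impossible, so $R$ has finite injective dimension over itself and is therefore Gorenstein. Assuming (i) (read with $E\neq0$), apply Corollary~\ref{cor161013a} to the pair $E,R$: we again obtain $\pd_R E\leq1$ or $\id_R R\leq1$. In the second case $R$ is Gorenstein as above, and in the first case $E$ is a non-zero finitely generated $R$-module that has finite projective dimension and finite injective dimension simultaneously, so $R$ is Gorenstein by a theorem of Foxby.

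The step I expect to be the main obstacle is the final part of (i)$\Rightarrow$(ii): Corollary~\ref{cor161013a} only delivers $\pd_R E\leq1$, and one then needs the separate (non-formal) fact that a non-zero finitely generated module of finite projective and finite injective dimension forces the ring to be Gorenstein. One should also watch the trivial cases: in (i) the module $E$ must be taken non-zero, or else the condition is vacuous, whereas in (iv) the module is automatically non-zero because its projective dimension is infinite. Everything else is either the Main Theorem, a single application of Corollary~\ref{cor161013a}, or the standard vanishing of positive $\Ext_R(-,R)$ on maximal Cohen-Macaulay modules over a Gorenstein ring.
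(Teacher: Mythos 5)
Your proof is correct and follows essentially the same route as the paper: Corollary~\ref{cor161013a} plus Foxby's theorem for (i)$\Rightarrow$(ii), the Main Theorem for (ii)$\Leftrightarrow$(iii), and Corollary~\ref{cor161013a} again for (iv)$\Rightarrow$(ii). The only (harmless) deviation is your witness for (iii)$\Rightarrow$(iv): you take the maximal Cohen--Macaulay module $M=R/\fm_T\cong S$, whereas the paper takes $M=k$, noting that $\pd_R k=\infty$ because $R$ is not regular and that $\Ext^i_R(k,R)=0$ for $i\gg0$ because $R$ is Gorenstein---both choices work.
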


\begin{proof}
By Proposition~\ref{prop161010a}, the ring $R$ is a non-trivial fiber product. Let us assume that $R=S\times_k T$. (In particular, $\ell=k$ in this case.)

(i)$\implies$(ii). It follows from our vanishing assumption and Corollary~\ref{cor161013a} that $R$ is Gorenstein or $\pd_R(E)<\infty$. In the latter case, $R$ is also Gorenstein by Foxby~\cite{foxby}.

(ii)$\implies$(iii) follows directly from the Main Theorem.

(iii)$\implies$(iv). By Corollary~\ref{cor170119a}, the ring $R$ is not regular. Thus, by Auslander-Buchsbaum and Serre~\cite{auslander:cm, Serre} we have $\pd_R(k)=\infty$. Since $R$ is Gorenstein we also have $\Ext^{i}_R(k,R)=0$ for all $i\gg 0$.

(iv)$\implies$(i). Since $M$ has infinite projective dimension, our vanishing assumption and Corollary~\ref{cor161013a} imply that $R$ is a Gorenstein ring. This completes the proof.
\end{proof}

\begin{disc}\label{disc050117}
In Corollary~\ref{cor170106b}, if we further assume that $R$ is a
quotient of a regular ring, then by~\cite[Theorem 3.2.4]{Ryo1} the ring $R$ is isomorphic to a quotient $A/(xy)$ of a regular local ring $A$ of dimension 2, where $x,y$ is a regular system of parameters for $A$.
\end{disc}

The following is a generalization of~\cite[Proposition 3.10]{adela}. Recall that a finitely generated $R$-module $X$ is \emph{totally reflexive} if
$\Hom_R(\Hom_R(X,R),R)\cong X$ and
$\Ext^{i}_R(X,R)=0=\Ext^{i}_R(\Hom_R(X,R),R)$ for all $i> 0$.

\begin{cor}\label{cor160519a}
Assume that $\fm_R$ is decomposable. If $R$ is artinian, then $R$ has no non-free finitely generated module $M$ such that $\Ext^{i}_R(M,R)=0$ for all $i\gg 0$. In particular, $R$ has no non-free totally reflexive modules.
\end{cor}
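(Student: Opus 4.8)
The plan is to argue by contradiction, combining Corollary~\ref{cor161013a} with the artinian hypothesis. Suppose $M$ is a non-free finitely generated $R$-module with $\Ext^i_R(M,R)=0$ for all $i\gg 0$. Since $\fm_R$ is decomposable, Corollary~\ref{cor161013a} (applied with $N=R$) gives that either $\pd_R(M)\leq 1$ or $\id_R(R)\leq 1$. I would treat these two alternatives separately and rule each one out.

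In the first alternative, $\pd_R(M)\leq 1$; since $R$ is artinian (in particular $\depth R=0$), the Auslander--Buchsbaum formula forces $\pd_R(M)=0$, i.e.\ $M$ is free, contradicting the choice of $M$. (One should note $\pd_R M<\infty$ already implies $\pd_R M = \depth R - \depth M = 0$.) In the second alternative, $\id_R(R)\leq 1$, which in an artinian local ring means $\id_R(R)=0$, so $R$ is self-injective; an artinian local ring with $\id_R R<\infty$ is Gorenstein, hence a $0$-dimensional hypersurface or at least a Gorenstein ring of dimension $0$. But by the Main Theorem a non-trivial fiber product that is Gorenstein must be a hypersurface of dimension $1$, contradicting $\dim R = 0$. (Alternatively, invoke Proposition~\ref{prop170110a}: a Gorenstein, hence Cohen--Macaulay, non-trivial fiber product has $\dim R = \dim S = \dim T$ with $S,T$ Cohen--Macaulay, and then the Main Theorem rules out dimension $0$ directly via the type computation in Case~1.) Either way both alternatives are impossible, so no such $M$ exists.

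For the "in particular" clause: if $X$ is totally reflexive, then by definition $\Ext^i_R(X,R)=0$ for all $i>0$, so $X$ falls under the first statement and must be free. Hence $R$ has no non-free totally reflexive modules.

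The main obstacle, such as it is, is making sure the two cases coming out of Corollary~\ref{cor161013a} are each genuinely contradictory in the artinian setting — the first via Auslander--Buchsbaum collapsing finite projective dimension to freeness, and the second via the fact that the Main Theorem precludes a $0$-dimensional Gorenstein non-trivial fiber product. Neither step is deep; the only care needed is to cite the finite-injective-dimension-implies-Gorenstein fact (Bass) for the artinian ring and to confirm that $\id_R R \le 1$ with $\dim R = 0$ forces $\id_R R = 0$, which is immediate since the nonvanishing of $\Ext$ defining the type sits in degree $\depth R = 0$.
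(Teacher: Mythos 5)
Your argument is correct and is exactly the intended derivation: the paper states this corollary without proof precisely because it follows from Corollary~\ref{cor161013a} (ruling out $\pd_R M\le 1$ via Auslander--Buchsbaum over an artinian ring, and $\id_R R\le 1$ via the Main Theorem, which forbids a Gorenstein non-trivial fiber product of dimension $0$). The only stray remark is the parenthetical ``hence a $0$-dimensional hypersurface,'' which is neither true in general nor needed; the rest is fine.
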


\section*{Acknowledgments} We are grateful to Mohsen Asgharzadeh, Ananthnarayan Hariharan, and Sean Sather-Wagstaff for helpful discussions about this work.


\providecommand{\bysame}{\leavevmode\hbox to3em{\hrulefill}\thinspace}
\providecommand{\MR}{\relax\ifhmode\unskip\space\fi MR }
\providecommand{\MRhref}[2]{%
  \href{http://www.ams.org/mathscinet-getitem?mr=#1}{#2}
}
\providecommand{\href}[2]{#2}

\end{document}